\newcommand{\C}{\mathbb{C}}
\renewcommand{\P}{\mathbb{P}}
\newcommand{\Q}{\mathbb{Q}}
\newcommand{\R}{\mathbb{R}}
\newcommand{\ft}{\mathfrak{t}}
\newcommand{\cH}{\mathcal{H}}
\newcommand{\cX}{\mathcal{X}}
\renewcommand{\a}{\alpha}
\renewcommand{\d}{\delta}
\newcommand{\e}{\varepsilon}
\renewcommand{\r}{\rho}
\newcommand{\bp}{\bar{\partial}}
\newcommand{\dd}{\sqrt{-1}\partial \bar{\partial}}
\newcommand{\ddt}{\frac{d}{dt}}
\newcommand{\cf}{{\rm cf.\ }} 
\newcommand{\eg}{{\rm e.g.\ }} 
\newcommand{\ie}{{\rm i.e.\ }} 
\newcommand{\NA}{\mathrm{NA}}
\DeclareMathOperator{\DF}{DF}
\DeclareMathOperator{\Fut}{Fut}
\DeclareMathOperator{\Ric}{Ric}
\renewcommand{\leq}{\leqslant}
\renewcommand{\geq}{\geqslant}
\renewcommand{\hat}{\widehat}
\renewcommand{\tilde}{\widetilde}
\numberwithin{equation}{section}       
\newtheorem{prop} {Proposition} [section]
\newtheorem{thm}[prop] {Theorem} 
\newtheorem{dfn}[prop] {Definition}
\newtheorem{cor}[prop]{Corollary}
\theoremstyle{remark}
\newtheorem*{ackn}{\bf{Acknowledgment}}
\title[Quantitative estimates for optimal degenerations]{The K\"ahler-Ricci flow and quantitative bounds for Donaldson-Futaki invariants of optimal degenerations} 
\date{\today}
\author[R. Takahashi]{Ryosuke Takahashi}
\address{Research Institute for Mathematical Sciences\\
 Kyoto University\\
 Kyoto 606-8502\\
 JAPAN}
\email{tryosuke@kurims.kyoto-u.ac.jp}
\subjclass[2010]{Primary 53C55; Secondary 14L24}
\keywords{K\"ahler-Ricci flow, optimal degeneration, greatest Ricci lower bound}
\begin{document}
\maketitle
\begin{abstract}
We establish a lower bound for the Donaldson-Futaki invariant of optimal degenerations produced by the K\"ahler-Ricci flow in terms of the greatest Ricci lower bound on arbitrary Fano manifolds. As an application, we can generalize the finiteness of the Futaki invariants on K\"ahler-Ricci solitons obtained by Guo-Phong-Song-Sturm to the space of all Fano manifolds. Also, we discuss the relation to Hisamoto's inequality for the infimum of the $H$-functional.
\end{abstract}
\section{Introduction} \label{int}
A central question in K\"ahler geometry is which Fano manifolds admit K\"ahler-Einstein metrics. More precisely, Yau-Tian-Donaldson conjecture states that a Fano manifold  admits a K\"ahler-Einstein metric if and only if it is K-polystable, \ie the Donaldson-Futaki invariant $\DF(\cX)$ is non-negative for all special degenerations $\cX$ and equality holds if and only if $\cX$ is product. This conjecture was resolved by Chen-Donaldson-Sun \cite{CDS15a,CDS15b,CDS15c} and Tian \cite{Tia15}.

It seems that not so much things are known in unstable cases. Let $X$ be an $n$-dimensional Fano manifold. According to \cite{He16}, we define the $H$-functional to be
\[
H(\omega):=\int_X \r e^\r \omega^n, \quad \omega \in c_1(X),
\]
where $\r$ is the Ricci potential of $\omega$ defined by
\[
\Ric(\omega)-\omega=\dd \r, \quad \int_X e^\r \omega^n=(-K_X)^n=:V.
\]
By Jensen's inequality we have $H(\omega) \geq 0$ with equality holds if and only if $\omega$ is K\"ahler-Einstein. A concern in the unstable case is to construct a test configuration which optimally destabilizes the K\"ahler or algebro-geometric structure of $X$. Geometric flows are one of the most effective tool to attack this problem. Indeed, using the resolution to the Hamilton-Tian conjecture \cite{CW14}, Dervan-Sz\'ekelyhidi \cite{DS16b} showed that the destabilizer $\cX_a$ produced by the K\"ahler-Ricci flow \cite{CSW18} is optimal in the sense that
\begin{equation} \label{opt}
\inf_{\omega \in c_1(X)}H(\omega)=\sup_\cX (-H(\cX))=-H(\cX_a),
\end{equation}
where the supremum of the RHS is taken over all special degenerations. The $H$-invariant $H(\cX_a)$ they introduced is computed on a (singular) K\"ahler-Ricci soliton, which arises as the unique sequential polarized Gromov-Hausdorff limit along the flow (see Section \ref{pre} and Section \ref{plm} for precise details). However a problem is that it is hard to compute $H(\cX_a)$ and $\DF(\cX_a)$ directly since the soliton vector field is determined implicitly as the (unique) critical point of a strictly convex function on the Lie algebra consisting of holomorphic vector fields (\cf \cite{BN14}).

So it has a meaning to establish the ``quantitative'' bounds for $H(\cX_a)$ and $\DF(\cX_a)$, that is, we want to get bounds by explicitly computable numbers. We consider the greatest Ricci lower bound of $X$ \cite{Sze11} defined by
\[
R(X):=\sup \bigg\{ r \in [0,1] \bigg| \text{${}^\exists \omega \in c_1(X)$ such that $\Ric(\omega)>r\omega$} \bigg\}.
\]
It is known that the equality $R(X)=1$ holds if and only if $X$ is K-semistable (\cf \cite{BBJ15,Li17}), and the invariant $R(X)$ is related to Berman-Fujitas' $\d$-invariant $\d(X)$ by the formula $R(X)=\min\{ \d(X),1\}$ \cite{BJ18,Fuj16}. Also, we have $R(X) \leq S(X)$, where $S(X)$ is an algebro-geometric invariant defined in terms of uniform twisted K-stability (see \cite[Corollary 1.5]{Der16}). A remarkable thing is that the invariant $R(X)$ is computed explicitly in many specific cases (\eg \cite{Li11,Del17}).

The main theorem in this paper is the following:
\begin{thm} \label{bdf}
For any $n$-dimensional Fano manifold $X$, we have
\[
\DF(\cX_a) \geq -\frac{1-R(X)}{R(X)}nV,
\]
where the Donaldson-Futaki invariant $\DF(\cX_a)$ is computed on the $\Q$-Fano variety admitting a (singular) K\"ahler-Ricci soliton $(Y,\omega_Y,W_Y)$, which arises as the unique sequential polarized Gromov-Hausdorff limit along the K\"ahler-Ricci flow starting from any K\"ahler metric in $c_1(X)$. Also, the algebraic invariant $\DF(\cX_a)$ coincides with the integral invariant
\[
\Fut(W_Y):=\int_Y |\nabla \r_Y|_{\omega_Y}^2 \omega_Y^n=\int_Y |W_Y|_{\omega_Y}^2 \omega_Y^n,
\]
where $\r_Y$ denotes the Ricci potential of $\omega_Y$.
\end{thm}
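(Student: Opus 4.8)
The plan is to establish the two assertions by independent routes: the identification $\DF(\cX_a)=\Fut(W_Y)$ through the soliton geometry of the central fibre, and the quantitative lower bound through the twisted $K$-stability encoded by $R(X)$. Keeping these separate is essential, since the second assertion shows only $\DF(\cX_a)\geq 0$, whereas the bound in terms of the computable invariant $R(X)$ must be obtained without it.

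For the identity, the central fibre of $\cX_a$ is the $\Q$-Fano variety $Y$, equipped with the $\C^*$-action generated by the soliton vector field $W_Y$. Since the Donaldson--Futaki invariant is computed on the central fibre, it equals the classical Futaki invariant of $W_Y$ on $Y$, an identity that persists on $\Q$-Fano fibres through the analytic--algebraic dictionary underlying the Chen--Sun--Wang limit. The soliton equation $\Ric(\omega_Y)-\omega_Y=\dd\r_Y$ with $W_Y=\nabla^{1,0}\r_Y$ exhibits $\r_Y$ as the holomorphy potential of $W_Y$; inserting this into the integral formula for the Futaki invariant and using $W_Y(\r_Y)=|\nabla\r_Y|^2_{\omega_Y}$ collapses it to $\int_Y|\nabla\r_Y|^2_{\omega_Y}\,\omega_Y^n=\int_Y|W_Y|^2_{\omega_Y}\,\omega_Y^n$, as asserted. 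The Futaki integral and this substitution require the regularity theory for the singular soliton, to which I return below.

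For the quantitative bound I would exploit $R(X)=\min\{\d(X),1\}$ and the twisted Ding-semistability it encodes: at the level $r=R(X)$ one has, for every test configuration, a non-Archimedean inequality of the form $L^{\NA}(\cX)\geq R(X)\,E^{\NA}(\cX)$ for the non-Archimedean $L$- and energy functionals. Combined with the general comparison $\DF(\cX_a)\geq D^{\NA}(\cX_a)=L^{\NA}(\cX_a)-E^{\NA}(\cX_a)$ between the Mabuchi and Ding invariants, this yields $\DF(\cX_a)\geq-(1-R(X))\,E^{\NA}(\cX_a)$. It then remains to bound the energy of the optimal degeneration by $E^{\NA}(\cX_a)\leq nV/R(X)$, after which the two estimates combine to give $\DF(\cX_a)\geq-\frac{1-R(X)}{R(X)}nV$. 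In parallel with Hisamoto's inequality $\inf_\omega H(\omega)\leq\frac{1-R(X)}{R(X)}nV$ for the infimum of the $H$-functional, one may route the same estimate through \eqref{opt}, which relates this infimum to the slope governing $\DF(\cX_a)$.

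The main obstacle I anticipate is the energy estimate $E^{\NA}(\cX_a)\leq nV/R(X)$ with its sharp constant, equivalently Hisamoto's bound on $\inf_\omega H(\omega)$. The difficulty is twofold: one must transport the greatest Ricci lower bound $R(X)$ of $X$ into effective control along the entire flow ray producing $\cX_a$---rather than the a priori smaller invariant $R(Y)$ of the limit---using the monotonicity of the $H$-functional and the optimality \eqref{opt}; and one must justify the non-Archimedean and Duistermaat--Heckman formalism on the singular $\Q$-Fano $Y$ via the Berman--Boucksom--Jonsson and Chen--Sun--Wang machinery. Extracting precisely the factor $1/R(X)$ is where the greatest Ricci lower bound enters quantitatively, and I expect this spectral-type constant, rather than the formal structure of the inequality, to be the crux.
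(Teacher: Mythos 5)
Your proposal does not close the main step, and the fallback you sketch for it cannot work. The paper never proves, and never needs, an a priori energy bound of the form $E^{\NA}(\cX_a)\leq nV/R(X)$: such a bound is an \emph{output} of the argument, not an input. The actual mechanism is: (i) the maximum principle along the normalized K\"ahler--Ricci flow gives $S_{\omega_t}\geq \inf_X S_{\omega_0}\,e^{-t}$, hence $S_{\omega_Y}\geq 0$ on $Y_{\rm reg}$ after passing to the Cheeger--Gromov limit; (ii) the soliton identity $\Delta_{\omega_Y}\r_Y+\r_Y+|\bp\r_Y|^2_{\omega_Y}=c$ with $cV=-H(\cX_b)$ (equation \eqref{hfc}) then yields the pointwise bound $\max_Y\r_Y\leq n-H(\cX_b)/V$; (iii) an eigensection/fixed-point argument on the singular variety $Y$, via the lifted action \eqref{iac}, identifies $\lambda_{\rm max}\leq\max_Y\r_Y$, so that $(I^\NA-J^\NA)(\cX_b)\leq V\max_Y\r_Y-\int_Y\r_Y\omega_Y^n$ is controlled \emph{by $\DF$ and $H$ themselves} through \eqref{fdf}; and (iv) Sz\'ekelyhidi's coercivity of $M+(1-r)(I-J)$ for $r<R(X)$, in non-Archimedean form, closes a self-referential inequality $\DF(\cX_a)\geq(1-r)(\DF(\cX_a)-nV)$, which rearranges to the theorem. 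Your step (a) (twisted semistability via $\d(X)$ and $\DF\geq D^{\NA}$) is a plausible substitute for (iv), but your step (b) --- the energy estimate you yourself flag as ``the main obstacle'' --- is where all the content lies, and you give no argument for it. Worse, the routes you suggest are circular or run backwards: inequality \eqref{har} is deduced in the paper \emph{from} Theorem \ref{bdf}; Hisamoto's \eqref{his} requires $R(X)>1/4\pi$; and in any case, since $\DF(\cX_a)\leq H(\cX_a)$, an upper bound on $\inf_\omega H=-H(\cX_a)$ bounds $H(\cX_a)$ from \emph{below} and gives no lower bound on $\DF(\cX_a)$ --- the comparison between $\DF$ and $H$ points the wrong way for your purpose.

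There is also a sign-level error in your treatment of the identity, which undermines your opening structural claim. By \eqref{opt} and Jensen's inequality, $\DF(\cX_a)\leq H(\cX_a)\leq 0$, so the identity cannot read $\DF(\cX_a)=+\int_Y|\nabla\r_Y|^2_{\omega_Y}\omega_Y^n$ as you assert (that would force $\DF(\cX_a)=0$ for every $X$ and trivialize the theorem); your claim that ``the second assertion shows $\DF(\cX_a)\geq 0$'' is therefore untenable. What the paper's proof actually yields --- integrating the soliton identity against $\omega_Y^n$ and combining with \eqref{hfc} and \eqref{fdf} --- is $\DF(\cX_a)=\int_Y\r_Y\omega_Y^n+H(\cX_a)=-\int_Y|\bp\r_Y|^2_{\omega_Y}\omega_Y^n$, i.e.\ the Futaki invariant in the sign convention of \cite{PSS15,GPSS18}, under which the stated bound is exactly the nontrivial estimate $\int_Y|W_Y|^2_{\omega_Y}\omega_Y^n\leq\frac{1-R(X)}{R(X)}nV$. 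In particular the two assertions are not independent, contrary to your plan of keeping them separate: the formulas \eqref{hfc} and \eqref{fdf} that establish the identity are the very ones used inside the chain of inequalities (i)--(iv). Two further inaccuracies: the central fiber of $\cX_a$ is $\bar{X}$, not $Y$ ($Y$ is the central fiber of the second-step degeneration $\cX_b$ of Theorem \ref{htconj}), so you need the isomorphism of weight decompositions from \cite{CSW18} to transfer $\DF$, $H$ and $I^\NA-J^\NA$ from $\cX_b$ to $\cX_a$; and the relevant invariant of the limit is never $R(Y)$ --- the coercivity is used on $X$ and transported by the non-Archimedean slope, which your step (a) already accomplishes, so the difficulty you locate there is not the real one.
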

It seems that Theorem \ref{bdf} is new even in the case when $Y$ is isomorphic to $X$ (which is equivalent to say that $X$ admits a K\"ahler-Ricci soliton by \cite[Corollary 4.3]{DS16b}). To prove Theorem \ref{bdf}, we study the limit space $(Y,\omega_Y,W_Y)$ by mean of the non-Archimedean limits of energy functionals on the space of K\"ahler metrics developed in \cite{BHJ17,BHJ19}.

Theorem \ref{bdf} has some applications. First, by \cite{Cam92,KMM92}, we know that there exists uniform constant $C=C(n)>0$ (which is independent of $X$) such that $V<C$. Also from the uniform positive lower bound of the log canonical threshold \cite{Bir16}, there exists a uniform constant $\e=\e(n)>0$ (which is independent of $X$) such that $R(X)>\e$ (see \cite[Corollary 2.1]{GPSS18}). Combining with Theorem \ref{bdf} we obtain the following:
\begin{cor} \label{fss}
In the same setting as in Theorem \ref{bdf}, there exists a uniform constant $F=F(n)>0$ such that for any $n$-dimensional Fano manifold $X$, we have
\begin{equation} \label{bifv}
\Fut(W_Y) \geq -F.
\end{equation}
\end{cor}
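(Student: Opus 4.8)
The plan is to read off the result directly from Theorem \ref{bdf} and then absorb all the $X$-dependence into a single dimensional constant using the two uniform bounds recalled just above its statement. Theorem \ref{bdf} already supplies both the identity $\Fut(W_Y) = \DF(\cX_a)$ and the quantitative estimate
\[
\Fut(W_Y) = \DF(\cX_a) \geq -\frac{1-R(X)}{R(X)}\, nV .
\]
Since $R(X) \in (0,1]$, the prefactor $\frac{1-R(X)}{R(X)} = \frac{1}{R(X)} - 1$ is nonnegative, so establishing \eqref{bifv} amounts to bounding the nonnegative quantity $\frac{1-R(X)}{R(X)}\, nV$ from above by a constant depending only on $n$.

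First I would invoke monotonicity: the map $r \mapsto \frac{1}{r} - 1$ is strictly decreasing on $(0,1]$, so the uniform lower bound $R(X) > \e = \e(n) > 0$ — which is the uniform positivity of the greatest Ricci lower bound coming from Birkar's bound on log canonical thresholds \cite{Bir16} via \cite[Corollary 2.1]{GPSS18} — yields $\frac{1-R(X)}{R(X)} < \frac{1}{\e}$. Second, the boundedness of $n$-dimensional Fano manifolds \cite{Cam92,KMM92} gives a uniform bound $V = (-K_X)^n < C = C(n)$. Multiplying the two, I obtain
\[
\frac{1-R(X)}{R(X)}\, nV < \frac{nC}{\e},
\]
whence \eqref{bifv} holds with $F = F(n) := nC/\e$, which depends on $n$ alone. (A marginally sharper constant $F = (1-\e)nC/\e$ is available from $\frac{1-R(X)}{R(X)} < \frac{1-\e}{\e}$, but the precise value is immaterial.)

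There is essentially no analytic obstacle here: once Theorem \ref{bdf} is in hand, the remaining argument is pure arithmetic. The only genuine content — and the step I would treat as a black box — is the pair of global finiteness statements, namely that both the anticanonical volume $V$ and the reciprocal $1/R(X)$ stay bounded by constants depending on dimension alone, \emph{uniformly over the entire class of Fano $n$-folds}. These rest respectively on Koll\'ar--Miyaoka--Mori boundedness and on Birkar's solution of the boundedness of (log) Fano varieties, and it is their uniformity in $X$ — rather than the elementary combination with Theorem \ref{bdf} — that carries all the weight.
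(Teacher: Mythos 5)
Your proposal is correct and is exactly the paper's argument: the corollary is obtained by combining Theorem \ref{bdf} with the two uniform bounds $V < C(n)$ (Campana, Koll\'ar--Miyaoka--Mori) and $R(X) > \e(n)$ (Birkar via \cite[Corollary 2.1]{GPSS18}), precisely as you do. The remaining step is, as you say, pure arithmetic, and your constant $F = nC/\e$ (or the sharper $(1-\e)nC/\e$) is what the paper's combination yields.
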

In particular, when $X$ admits a K\"ahler-Ricci soliton $(\omega_X,W_X)$, one can apply the argument in Section \ref{plm} to $X$ directly, and get
\[
\Fut(W_X) \geq -F,
\]
which was conjectured in \cite[page 31]{PSS15}, and thereafter solved by Guo-Phong-Song-Strum \cite[Corollary 1.1]{GPSS18} by showing the compactness of K\"ahler-Ricci solitons. So Corollary \ref{fss} can be regarded as a generalization of their result to the space of all Fano manifolds. Also we should remark that the property \eqref{bifv} does not hold for the space of all (singular) K\"ahler-Ricci solitons on $\Q$-Fano varieties with at worst log terminal singularities as discussed in \cite[page 31]{PSS15}.

Now we will explain another application. We have the following inequality \eqref{har} as a direct consequence from \eqref{opt}, Theorem \ref{bdf} and $\DF(\cX_a) \leq H(\cX_a)$ (\cf \cite[Lemma 2.5]{DS16b}):
\begin{cor} \label{ubound}
For any $n$-dimensional Fano manifold $X$, we have
\begin{equation} \label{har}
\inf_{\omega \in c_1(X)} H(\omega) \leq \frac{1-R(X)}{R(X)}nV.
\end{equation}
\end{cor}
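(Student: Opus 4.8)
The plan is to chain together the three ingredients that are already in place, since \eqref{har} is advertised as a direct consequence of them. The starting point is the optimality identity \eqref{opt}, which identifies the quantity we wish to bound with (minus) the $H$-invariant of the flow destabilizer:
\[
\inf_{\omega \in c_1(X)} H(\omega) = -H(\cX_a).
\]
Thus it suffices to produce an upper bound for $-H(\cX_a)$, equivalently a lower bound for $H(\cX_a)$ itself.

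Next I would invoke the comparison $\DF(\cX_a) \leq H(\cX_a)$ recorded in \cite[Lemma 2.5]{DS16b}. Negating this inequality gives $-H(\cX_a) \leq -\DF(\cX_a)$, so the task reduces to bounding the Donaldson-Futaki invariant of the same destabilizer $\cX_a$ from below. This last step is precisely the content of the main result, Theorem \ref{bdf}, which furnishes
\[
\DF(\cX_a) \geq -\frac{1-R(X)}{R(X)}nV,
\]
and hence $-\DF(\cX_a) \leq \frac{1-R(X)}{R(X)}nV$. Concatenating the three displays yields
\[
\inf_{\omega \in c_1(X)} H(\omega) = -H(\cX_a) \leq -\DF(\cX_a) \leq \frac{1-R(X)}{R(X)}nV,
\]
which is exactly \eqref{har}.

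Since every ingredient is already available, there is no genuine obstacle at the level of the corollary itself: the entire difficulty is concentrated in Theorem \ref{bdf}, whose proof supplies the quantitative lower bound for $\DF(\cX_a)$. The only points requiring care are bookkeeping in nature, namely checking that the sign and direction of each inequality compose correctly along the chain, and confirming that \eqref{opt}, the comparison lemma of \cite{DS16b}, and Theorem \ref{bdf} all refer to the \emph{same} destabilizer $\cX_a$ produced by the K\"ahler-Ricci flow, so that the substitutions are legitimate.
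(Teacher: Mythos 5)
Your proposal is correct and follows exactly the paper's own argument: the paper derives \eqref{har} as a direct consequence of the identity \eqref{opt}, the comparison $\DF(\cX_a) \leq H(\cX_a)$ from \cite[Lemma 2.5]{DS16b}, and the lower bound of Theorem \ref{bdf}, chained in precisely the order you give. Your closing remark about verifying that all three ingredients refer to the same flow-produced destabilizer $\cX_a$ is the right point of care, and it holds here since \eqref{opt} and Theorem \ref{bdf} both concern the $\R$-degeneration arising from the K\"ahler-Ricci flow via \cite{CSW18}.
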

Corollary \ref{ubound} is inspired by Hisamoto's inequality \cite[Proposition 5.4]{His19}: if $R(X)>1/4\pi$, we have
\begin{equation} \label{his}
\inf_{\omega \in c_1(X)} H(\omega) \leq (1-R(X))nV.
\end{equation}
Hisamoto's proof is quite different from ours. Indeed, he proved the inequality \eqref{his} by using the relation between $\inf_\omega H(\omega)$ and the supremum of Perelman's $\mu$-functional (based on \cite[Theorem 4.2]{DS16b}), and applying the log-Sobolev inequality. Although our inequality \eqref{har} is weaker than \eqref{his}, an advantage is that it holds without any restrictions for $R(X)$.
\begin{ackn}
The author expresses his gratitude to Prof. Tomoyuki Hisamoto for useful discussions on this paper. Also the author is grateful to the referees for insightful comments which have helped to improve the article.
\end{ackn}
\section{Preriminaries} \label{pre}
Fix a K\"ahler metric $\hat{\omega} \in c_1(X)$. We start with the definitions and properties of several functionals on the space of K\"ahler potentials
\[
\cH:=\{ \phi \in C^\infty(X;\R)|\omega_\phi:=\hat{\omega}+\dd \phi>0\}.
\]
The Mabuchi functional, or K-energy is defined by its variation
\[
\d M(\phi):=-\int_X \d \phi (S_\phi-n)\omega_\phi^n,
\]
where $S_\phi$ denotes the scalar curvature of $\omega_\phi$. The $I$ and $J$-functionals are defined by
\[
I(\phi):=\int_X \phi(\hat{\omega}^n-\omega_\phi^n),
\]
\[
\d J(\phi):=\int_X \d \phi (\hat{\omega}^n-\omega_\phi^n).
\]
For $\phi \in \cH$ we define the Ricci potential $\r_\phi$ by
\[
\Ric(\omega_\phi)-\omega_\phi=\dd \r_\phi, \quad \int_X e^{\r_\phi} \omega_\phi^n=V,
\]
and the $H$-functional by
\[
H(\phi):=\int_X \r_\phi e^{\r_\phi} \omega_\phi^n.
\]

Next we review the non-Archimedean limits of $M$, $I$ and $J$ in the sense of \cite{BHJ17,BHJ19}. Here we will restrict our selves to special degenerations \cite{Tia97}:
\begin{dfn}
A special degeneration of $X$ is a flat normal $\Q$-Fano family $\pi \colon \cX \to \C$, together with a holomorphic vector field $v$ on $\cX$, which generates a $\C^\ast$-action on $\cX$ covering the standard action on $\C$. In addition, the fiber $\cX_\tau$ is required to be isomorphic to $X$ for one, and hence all $\tau \in \C^\ast$.
\end{dfn}
For a special degeneration $\cX$, we consider the corresponding Hilbert and weight polynomials
\[
N_k=a_0 k^n+a_1 k^{n-1}+O(k^{n-2}),
\]
\[
w_k=b_0 k^{n+1}+b_1 k^n+O(k^{n-1}).
\]
Then the {\it Donaldson-Futaki invariant} is defined by
\[
\DF(\cX):=n!(2b_1-nb_0).
\]
Also we define
\[
(I^{\NA}-J^{\NA})(\cX):=n!(-b_0+\lambda_{\rm max} a_0).
\]
where $\lambda_{\rm max} \in \Q$ denotes the maximum weight of the $\C^\ast$-action on $\cX_0$ (indeed, the functional $I$, $J$ admits its own non-Archimedean limit $I^{\NA}$, $J^{\NA}$ respectively, but we will not use their definitions here). The functional $I^{\NA}-J^{\NA}$ is also called {\it minimum norm} of $\cX$ (\cf \cite[Remark 7.12]{BHJ17}, \cite[Section 2.1]{Der16}). Let us consider also the analytic aspects of these functionals. We choose a {\it smooth} K\"ahler metric $\omega_0 \in c_1(\cX_0)$, \ie it is the restriction of a smooth metric under a projective embedding of $\cX_0$. We define its {\it Ricci potential} $\r_0$ by
\[
\Ric(\omega_0)-\omega_0=\dd \r_0, \quad \int_{\cX_0} e^{\r_0} \omega^n=V.
\]
So the function $\r_0$ is continuous on $\cX_0$ and smooth on $\cX_{0,{\rm reg}}$. Assume $\omega_0$ is ${\rm Im}(v)$-invariant and let $\theta_0$ be a Hamiltonian for the induced holomorphic vector field $v$ on $\cX_0$. Then we have
\[
\DF(\cX)=-\int_{\cX_0} \theta_0 e^{\r_0} \omega_0^n+\int_{\cX_0} \theta_0 \omega_0^n,
\]
\[
(I^{\NA}-J^{\NA})(\cX)=-\int_{\cX_0} \theta_0 \omega_0^n+V \max_{\cX_0} \theta_0.
\]
Actually, to prove this, we take a resolution $p \colon \tilde{\cX}_0 \to \cX_0$ and compute the algebraic Futaki invariant on $\tilde{\cX}_0$ with respect to the polarization $p^\ast(-kK_{\cX_0})$ for a large divisible $k$, using the (equivariant) Riemann-Roch formula by a smooth background metric (\cf \cite[page 264]{CDS15c}). In order to deal with the maximum weight $\lambda_{\rm max}$, we should take $\omega_0=k^{-1} \omega_{\rm FS}$ so that $v$ has the Hamiltonian function
\[
\theta_0=\frac{\sum_i \lambda_i |Z_i|^2}{\sum_i |Z_i|^2}
\]
for some suitable homogeneous coordinates $\{Z_i\}$ (as in the proof of \cite[Lemma 12]{DS16a}). Since $\cX_0$ is not contained in any hyperplanes, we can take $x_0 \in \cX_0 \backslash \{Z_{\rm max}=0\}$ and consider the gradient flow of $\theta_0$ starting from $x_0$ on $\P^{N_k-1}$. Since $\cX_0$ is compact and invariant under the Hamiltonian action of $v$, the flow converges to a limit point $x_\infty \in \cX_0$ where
\[
\max_{\cX_0} \theta_0 \geq \theta_0(x_\infty)=\lambda_{\rm max}=\max_{\P^{N_k-1}} \theta_0.
\]
Then a consequence from \cite{BHJ17,BHJ19} is the following:
\begin{thm}
Let $\cX$ be a special degeneration of $X$ and $\{\phi_t\} \subset \cH$ be an associated geodesic ray, then we have
\[
\lim_{t \to \infty} \frac{M(\phi_t)}{t}=\DF(\cX), \quad \lim_{t \to \infty} \frac{(I-J)(\phi_t)}{t}=(I^\NA-J^\NA)(\cX).
\]
\end{thm}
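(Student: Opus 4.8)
The plan is to deduce both identities from the asymptotic slope formulas of Boucksom-Hisamoto-Jonsson \cite{BHJ17,BHJ19}, applied to each energy functional in turn. The guiding principle is that each of the functionals $E$ (Monge-Amp\`ere energy), $E^{\Ric}$ (twisted energy), $I$, $J$, the entropy, and hence $M$, admits a non-Archimedean analogue $F^{\NA}(\cX)$, expressible as an intersection number, equivalently as an integral against the Duistermaat-Heckman measure of the $\C^\ast$-action on $\cX_0$; and that for the (weak) geodesic ray $\{\phi_t\}$ associated to the special degeneration one has $\lim_{t \to \infty} F(\phi_t)/t = F^{\NA}(\cX)$. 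I would set up this dictionary first, recalling that the ray $\{\phi_t\}$ is the one canonically attached to the $S^1$-invariant positive metric on the total space of $\cX$.

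I would treat $I - J$ first, since it is a pure energy functional and its slope is governed by the most robust, intersection-theoretic part of the theory. Following \cite{BHJ17,BHJ19}, one writes $\lim_{t \to \infty} (I-J)(\phi_t)/t$ in terms of the leading coefficients $a_0, b_0$ of the Hilbert and weight polynomials together with the maximal weight $\lambda_{\rm max}$, which matches the stated definition $(I^{\NA}-J^{\NA})(\cX) = n!(-b_0 + \lambda_{\rm max} a_0)$. The analytic form $-\int_{\cX_0} \theta_0\, \omega_0^n + V \max_{\cX_0} \theta_0$ recorded in the excerpt is the same quantity read off from the Hamiltonian $\theta_0$ via equivariant Riemann-Roch, the identity $\lambda_{\rm max} = \max_{\cX_0} \theta_0$ being exactly the gradient-flow argument already sketched above.

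For the Mabuchi functional I would use the Chen-Tian entropy decomposition, writing
\[
M(\phi) = \int_X \log \frac{\omega_\phi^n}{\hat\omega^n}\, \omega_\phi^n + \big(\text{a fixed combination of } E \text{ and } E^{\Ric}\big),
\]
so that the slope of $M$ splits into the slope of the entropy and the slopes of the energy pieces. The energy pieces are handled exactly as for $I-J$, yielding their intersection-theoretic limits. The entropy piece is the delicate one: invoking the lower semicontinuity of the relative entropy along the ray and the identification of its non-Archimedean limit with a log-discrepancy integral over $\cX_0$ from \cite{BHJ17,BHJ19}, one computes $\lim_{t \to \infty} \int_X \log(\omega_{\phi_t}^n/\hat\omega^n)\,\omega_{\phi_t}^n / t$. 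Assembling the pieces gives $\lim_{t \to \infty} M(\phi_t)/t = M^{\NA}(\cX)$.

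The final step is to identify $M^{\NA}(\cX)$ with $\DF(\cX)$. In general $M^{\NA}$ exceeds $\DF$ by a correction measuring the failure of the central fiber to be reduced; but for a special degeneration $\cX_0$ is a normal $\Q$-Fano variety, hence integral, so this correction vanishes and $M^{\NA}(\cX) = \DF(\cX)$. The analytic expression $\DF(\cX) = -\int_{\cX_0} \theta_0 e^{\r_0} \omega_0^n + \int_{\cX_0} \theta_0\, \omega_0^n$ then follows from the same equivariant Riemann-Roch computation on a resolution $p \colon \tilde{\cX}_0 \to \cX_0$ described above. I expect the entropy slope to be the main obstacle: one must control the Monge-Amp\`ere entropy along a merely weak geodesic ray and match it with the algebraic log-discrepancy data, and it is precisely the normal, $\Q$-Gorenstein structure of the special degeneration that is needed to kill the discrepancy correction and collapse $M^{\NA}$ onto $\DF$.
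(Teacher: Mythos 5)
Your proposal is correct and takes essentially the same approach as the paper: the paper offers no proof of its own, stating the result as a direct consequence of the slope formulas of \cite{BHJ17,BHJ19}, which is exactly the machinery you invoke. Your unpacking of that citation (energy slopes via intersection numbers and Duistermaat--Heckman measures, the Chen--Tian decomposition for $M$, and the identification $M^{\NA}(\cX)=\DF(\cX)$ because the central fiber of a special degeneration is normal, hence reduced) is an accurate account of what that citation hides.
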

On the other hand, the $H$-functional does not have such an non-Archimedean description for a certain energy functional. However according to \cite{DS16b}, we define the {\it $H$-invariant} $H(\cX)$ to be
\[
H(\cX):=2n!b_1-(n+1)!b_0+V \lim_{k \to \infty} \log \bigg( \frac{1}{N_k} \sum_{i=1}^{N_k} e^{\frac{\lambda_{k,i}}{k}}\bigg),
\]
where $\lambda_{k,i}$ denotes the weight of $\C^\ast$-action on $H^0(\cX_0,-kK_{\cX_0})$ generated by $v$. As shown in \cite[Proposition 2.12]{DS16b},  the $H$-invariant also has an analytic description
\[
H(\cX)=-\int_{\cX_0} \theta_0 e^{\r_0} \omega_0^n+V \log \bigg( \frac{1}{V} \int_{\cX_0} e^{\theta_0} \omega_0^n \bigg).
\]
By Jensen's inequality, one can observe that $\DF(\cX) \leq H(\cX)$ with equality if and only if $\cX$ is trivial (\cf \cite[Lemma 2.5]{DS16b}).

Finally, we remark on $\R$-degenerations, a generalization of the notion of a test configuration using the language of filtrations \cite{DS16b}. Let $Z$ be an arbitrary projective variety with ample line bundle $L \to Z$. We define the graded coordinate ring
\[
R(Z,L):=\bigoplus_{k \geq 0} H^0(Z,kL).
\]
We write $R_k=H^0(X,kL)$ for simplicity.
\begin{dfn}
An $\R$-indexed filtration $\{F^\lambda R_k\}_{\lambda \in \R}$ consists of the data satisfying for each $k$
\begin{itemize}
\item $F$ is decreasing: $F^\lambda R_k \subset F^{\lambda'} R_k$ if $\lambda \geq \lambda'$.
\item $F$ is left-continuous: $F^\lambda R_k=\cap_{\lambda'<\lambda} F^{\lambda'} R_k$.
\item $F^\lambda R_k=0$ for sufficiently large $\lambda$ and $F^\lambda R_k=R_k$ for sufficiently small $\lambda$.
\item $F$ satisfies the multipricative property:
\[
F^\lambda R_k \cdot R^{\lambda'} R_{k'} \subset F^{\lambda+\lambda'} R_{k+k'}
\]
for all $\lambda, \lambda' \in \R$ and $k, k' \geq 0$.
\end{itemize}
\end{dfn}
The {\it associated graded ring} of the filtration is defined to be
\[
{\rm gr} F^\lambda R(Z,L):=\bigoplus_{k \geq 0} \bigoplus_i F^{\lambda_{k,i}} R_k/F^{\lambda_{k,i+1}} R_k,
\]
where the $\lambda_{k,i}$ are values of $\lambda$ where the filtration of $R_k$ is discontinuous.
\begin{dfn}
An $\R$-degenration for $(Z,L)$ is a filtration of $R(Z,mL)$ for some integer $m>0$, whose associated graded ring is finitely generated.
\end{dfn}
For a given $\R$-degeneration, let $\bar{R}$ be the associated graded ring of the filtration and set $Z_0:={\rm Proj}Z_0$. Then the filtration gives a (possibly irrational) real one-parameter family which acts on $F^{\lambda_{k,i}} R_k/F^{\lambda_{k,i+1}} R_k$ by multiplying the factor $\tau^{\lambda_{k,i}}$. We may assume that we have an embedding of $Z_0$ to a projective space $\P^{N-1}$ with $N:=\dim H^0(Z,L)$. Then the real one-parameter group is given by a projective automorphisms $e^{t\Lambda}$ which preserves $Z_0$, where $\Lambda:={\rm diag}(\lambda_{1,1},\ldots, \lambda_{1,N})$ is a diagonal matrix. In addition, we also have an embedding $Z \hookrightarrow \P^{N-1}$ so that $\lim_{t \to \infty} e^{t \Lambda} \cdot Z=Z_0$ in the Hilbert scheme. By taking the closure of $\{e^{\sqrt{-1}t\Lambda}|t \in \R\}$ in $U(N)$, we obtain a real torus $T \subset U(N)$ acting on $Z_0$. So the action of $e^{t \Lambda}$ corresponds to a choice of $\xi \in \ft:={\rm Lie}(T)$. Then as discussed in \cite{CSW18}, we can take a sequence of $\C^\ast$-subgroup $\nu_\ell$ in the complexified torus $T^\C$ with $\lim_{\tau \to 0} \nu_\ell(\tau) \cdot Z=Z_0$ and $\xi_\ell \to \xi$ as $\ell \to \infty$, where $\xi_\ell$ denotes the infinitesimal generator of $\nu_\ell$. In this way, we can approximate any $\R$-degeneration by test configurations. Moreover, for $\xi \in \ft$ and $s \in \C$, we define the {\it weight character} by
\[
C(\xi,s):=\sum_{k \geq 0, \a \in \ft^\ast} e^{-sk} \a(\xi) \dim \bar{R}_{k,\a},
\]
where $\bar{R}_{k,\a}$ denotes the components of the $\C^\ast \times T^\C$-action on $\bar{R}$ defined by the $k$-grading and $T^\C$-action on $\bar{R}$. Then $C(\xi,s)$ has a Laurent series expansion as
\[
C(\xi,s)=\frac{b_0(n+1)!}{s^{n+2}}+\frac{b_1(n+2)!}{s^{n+1}}+O(s^{-n}),
\] 
where the coefficients $b_0$, $b_1$ are smooth in $\xi$, and coincide with $b_0$, $b_1$ which appear in the asymptotic expansion of the total weight $w_k$ when $\xi$ is rational. Also a transcendental term $\lim_{k \to \infty} N_k^{-1} \sum_{i=1}^{N_k} e^{\lambda_{k,i}/k}$ in the $H$-invariant admits a continuous extension for $\xi \in \ft$. So from the algebraic descriptions of each invariant, we can extend $\DF$, $I^\NA-J^\NA$ and $H$ to be continuous under this approximation procedure (see \cite[Section 2.2]{DS16b} for more details).
\section{Proof of Theorem \ref{bdf}} \label{plm}
Now we give the proof of Theorem \ref{bdf}.
\begin{proof}[Proof of Theorem \ref{bdf}]
For any K\"ahler metric $\omega_0 \in c_1(X)$, we consider the normalized K\"ahler-Ricci flow starting from $\omega_0$:
\[
\ddt \omega_t=-\Ric(\omega_t)+\omega_t.
\]
The evolution of scalar curvature along the flow (for instance, see \cite{Ive93}) is given by
\[
\ddt S_{\omega_t}=\Delta_{\omega_t} S_{\omega_t}+|\Ric(\omega_t)|^2-S_{\omega_t}.
\]
The maximum principle shows that we have $S_{\omega_t} \geq \inf_X S_{\omega_0} \cdot e^{-t}$ for all positive time. We use the following, due to Chen-Wang \cite{CW14} and Chen-Sun-Wang \cite{CSW18}:
\begin{thm} \label{htconj}
Let $(X,\omega_t)$ be the solution of the K\"ahler-Ricci flow. Then the sequential polarized Gromov-Hausdorff limit of $(X, \omega_t)$ as $t \to \infty$ is a $\Q$-Fano variety $Y$, independent of choice of subsequences, which admits a (singular) K\"ahler-Ricci soliton $\omega_Y$ with soliton vector field $W_Y$. Moreover, this convergence is improved to be in $C^\infty$-Cheeger-Gromov topology away from the singular set $Y_{\rm sing}$. Assume that $W_Y \neq 0$. Then there exists a ``two-step'' $\R$-degeneration from $X$ to $Y$, \ie an $\R$-degeneration $\cX_a$ for $X$ with $\Q$-Fano central fiber $\bar{X}$, and an $\R$-degeneration $\cX_b$ for $\bar{X}$ with central fiber $Y$. The corresponding one-parameter subgroup of automorphisms on $\cX_b$ is induced by the soliton vector field $W_Y$ on $Y$.
\end{thm}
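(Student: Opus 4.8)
The plan is to follow the two papers cited, Chen-Wang \cite{CW14} and Chen-Sun-Wang \cite{CSW18}, splitting the argument into an analytic part that produces the soliton limit and an algebraic part that produces the two-step degeneration. First I would assemble Perelman's uniform estimates along the normalized flow: the monotonicity of Perelman's $\mathcal{W}$- and $\mu$-functionals yields non-collapsing together with uniform bounds on the scalar curvature, the diameter, and the Ricci potential $\r_{\omega_t}$ and its gradient. Combined with Perelman's pseudolocality, these place the family $(X,\omega_t)$ within the weak compactness theory for non-collapsed Ricci flows developed in \cite{CW14}, so that any sequence $t_j \to \infty$ has a subsequence converging in the Gromov-Hausdorff sense to a compact metric space $(Y,d_Y)$ (compactness of $Y$ being guaranteed by the uniform diameter bound).

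Next I would analyze the structure of $(Y,d_Y)$ using the Cheeger-Colding-Tian regularity theory together with the Cheeger-Naber codimension-four estimate adapted to the Ricci-flow setting. This decomposes $Y = Y_{\rm reg} \sqcup Y_{\rm sing}$ with $Y_{\rm sing}$ of Hausdorff codimension at least four; on $Y_{\rm reg}$ the convergence improves to $C^\infty$ in the Cheeger-Gromov topology, and the limiting metric $\omega_Y$ is a (singular) shrinking K\"ahler-Ricci soliton whose Ricci potential $\r_Y$ has holomorphic gradient $W_Y$, consistently with the formula $\Fut(W_Y)=\int_Y |W_Y|_{\omega_Y}^2\omega_Y^n$ of Theorem \ref{bdf}. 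For the uniqueness of $Y$ I would invoke the uniqueness of K\"ahler-Ricci solitons (Tian-Zhu) together with a \L ojasiewicz-type argument for Perelman's functional near the soliton, upgrading subsequential convergence to convergence of the whole flow and thereby making $Y$ independent of the subsequence.

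The analytic heart — and what I expect to be the main obstacle — is the \emph{partial $C^0$-estimate}: one must produce, uniformly in $t$, enough pluri-anticanonical sections to give a uniform projective embedding of $(X,\omega_t)$, so that the Hilbert-scheme limit of these embeddings realizes $Y$ as a normal projective variety. This requires uniform heat-kernel bounds along the flow, H\"ormander's $L^2$-estimate for $\bar\partial$ with weights controlled by $\r_{\omega_t}$, and the Donaldson-Sun argument that tangent cones of the limit are algebraic and that holomorphic sections extend across $Y_{\rm sing}$. The codimension-four bound is precisely what forces the singularities to be log terminal, so that the polarization passes to $-K_Y$ and $Y$ becomes a $\Q$-Fano variety.

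Finally, for the two-step degeneration I would extract the algebraic data from these uniform embeddings. They endow the anticanonical ring $R(X,-K_X)$ with an $\R$-filtration whose associated graded ring is the coordinate ring of $Y$, and after the approximation of $\R$-degenerations by test configurations described in Section \ref{pre}, this realizes $Y$ as the central fiber of an $\R$-degeneration of $X$. Assuming $W_Y \neq 0$, I would factor this degeneration: the first step $\cX_a$ degenerates $X$ to an intermediate $\Q$-Fano variety $\bar{X}$, semistable in the modified sense, to which the flow converges algebraically; the second step $\cX_b$ degenerates $\bar{X}$ to $Y$, with the one-parameter subgroup generated by the possibly irrational soliton vector field $W_Y$ viewed as an element $\xi \in \ft = {\rm Lie}(T)$ inside $\Aut(\bar{X})$. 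Approximating $W_Y$ by the rational generators $\xi_\ell \to \xi$ of Section \ref{pre} makes the construction of $\cX_b$ precise, and the residual nature of $W_Y$ after the first step explains why two steps are needed.
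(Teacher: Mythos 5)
The first thing to say is that the paper does \emph{not} prove this statement: Theorem \ref{htconj} is imported as a black box, quoted from Chen--Wang \cite{CW14} and Chen--Sun--Wang \cite{CSW18}, and the paper's ``proof'' consists of the citation alone. So the honest comparison is between your sketch and the strategy of those two papers. At the level of the broad outline --- Perelman's estimates, the weak compactness theory of \cite{CW14}, the partial $C^0$-estimate giving uniform projective embeddings, Hilbert-scheme limits endowing $R(X,-K_X)$ with a filtration, and the factorization into the two degenerations $\cX_a$ and $\cX_b$ with the second generated by $W_Y$ --- your roadmap is consistent with \cite{CSW18}, and deferring the (very substantial) technical work to those references is exactly what the paper itself does.

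However, two of the steps where you commit to a specific mechanism are not the mechanism used, and as written they would fail. First, the independence of $Y$ from the subsequence: you propose Tian--Zhu uniqueness \cite{TZ02} plus a \L ojasiewicz-type argument for Perelman's functional ``upgrading subsequential convergence to convergence of the whole flow.'' Tian--Zhu uniqueness applies to smooth solitons on a fixed complex manifold, whereas here the limit is a singular soliton on a \emph{different} $\Q$-Fano variety; and no \L ojasiewicz inequality near such a singular limit (with jumping complex structure) was available --- such arguments require starting close to the critical point and give smooth convergence, which is simply false in the unstable case. In \cite{CSW18} the uniqueness is proved by an essentially algebraic argument: the intermediate fiber $\bar{X}$ and the soliton degeneration are pinned down via the filtration/Hilbert-scheme structure, reductivity of automorphism groups, and the uniqueness theory for singular K\"ahler--Ricci solitons in the sense of \cite{BN14} (Berman--Witt Nystr\"om), not by a dynamical stability argument. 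Second, your claim that ``the codimension-four bound is precisely what forces the singularities to be log terminal'' is wrong: the metric codimension of $Y_{\rm sing}$ does not by itself control discrepancies. The klt property of $Y$ comes from the partial $C^0$-estimate and the Donaldson--Sun-type analysis \cite{DS14} showing that $Y$ is normal and that the soliton volume form (equivalently, the potential of the limit metric on $-K_Y$) is locally integrable near $Y_{\rm sing}$; the codimension bound enters the regularity theory and the integration-by-parts arguments, but it is not what yields log terminality.
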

Applying Theorem \ref{htconj} to our flow $(X,\omega_t)$, we obtain $\R$-degenerations $\cX_a$ and $\cX_b$ as above. Without loss of generality, we assume that $\cX_a$ and $\cX_b$ are special degenerations (For general case, we simply approximate $\R$-degenerations by test configurations and use the continuity for $\DF$, $I^\NA-J^\NA$ and $H$). As shown in the proof of \cite[Lemma 3.4, Proposition 3.5]{CSW18}, the weight decompositions of $H^0(\bar{X},-kK_{\bar{X}})$ and $H^0(Y,-kK_Y)$ are isomorphic for all sufficiently large and divisible $k$. Hence we we have
\[
H(\cX_a)=H(\cX_b), \quad \DF(\cX_a)=\DF(\cX_b), \quad (I^\NA-J^{\rm NA})(\cX_a)=(I^\NA-J^{\rm NA})(\cX_b)
\]
by using algebraic descriptions for each invariant (where one can define the invariants $H(\cX_b)$, $\DF(\cX_b)$ and $(I^\NA-J^{\rm NA})(\cX_b)$ in an obvious way). Let $\r_Y$ be the Ricci potential of $\omega_Y$ normalized by $\int_X e^{\r_Y} \omega_Y^n=V$. Taking the trace we have
\[
S_{\omega_Y}-n=\Delta_{\omega_Y} \r_Y.
\]
Also we compute
\[
\dd(\Delta_{\omega_Y} \r_Y+\r_Y+|\bp \r_Y|_{\omega_Y}^2)=0
\]
on $Y_{\rm reg}$ by using the fact that the function $\r_Y$ is the Hamiltonian of the holomorphic vector field $W_Y$ with respect to $\omega_Y$ (for instance, we can check it in the same way as the smooth case \cite[equation (1.11)]{TZ02} since the computation is local). Since $Y$ is normal, we also get
\[
\Delta_{\omega_Y} \r_Y+\r_Y+|\bp \r_Y|_{\omega_Y}^2=c.
\]
We know that $\r_Y$ extends to a continuous function on $Y$. Also we know that $\nabla \r_Y$ is bounded and the Minkowski codimension of $Y_{\rm sing}$ strictly greater than 2. It follows that integrating by parts works well since we can take an exhaustive $K \subset Y_{\rm reg}$ with the volume of $Y \backslash K$ being small as well as we please (see the proof of \cite[Proposition 3.6]{CSW18} and \cite[Lemma 3.5, Theorem 4.2]{DS16b}). Here we note that these regularity results do not imply that the metric $\omega_Y$ is smooth in the sense of Section \ref{pre}. Nevertheless we can detect the constant $c$ as
\begin{equation} \label{hfc}
cV=\int_Y \r_Y e^{\r_Y} \omega_Y^n=-H(\cX_b),
\end{equation}
and also
\begin{equation} \label{fdf}
\DF(\cX_b)=\int_Y \r_Y \omega_Y^n+H(\cX_b),
\end{equation}
where the last equality of \eqref{hfc} was shown in the proof of \cite[Theorem 3.2]{DS16b}. To prove \eqref{fdf}, we used also the fact that the invariant $\DF(\cX_b)$ arises as the limit derivative of Ding functional along the flow generated by $W_Y$ (\cf \cite{BN14}). In particular, \eqref{hfc} and \eqref{fdf} imply that for any smooth metric $\omega_0 \in c_1(Y)$ with Hamiltonian $\theta_0$ with respect to $W_Y$ normalized by $\int_Y e^{\theta_0} \omega_0^n=V$, we have
\[
\int_Y \r_Y \omega_Y^n=\int_Y \theta_0 \omega_0^n.
\]

To deal with $\max_Y \r_Y$, we need an interpretation of Hamiltonians in terms of a lifted action on a line bundle and its holomorphic sections. Let $J$ be a complex structure on $Y_{\rm reg}$, $h_Y$ a continuous fiber metric on the $\Q$-line bundle $-K_Y$ with curvature $\omega_Y$ and $\nabla$ the compatible connection on $Y_{\rm reg}$ arising as the sequential polarized Gromov-Hausdorff limit along the K\"ahler-Ricci flow $(X,\omega_t)$ in the sense of \cite{DS14}. Also, there is a positive integer $k$ (depending only on $(X,\omega_0)$) such that we have an embedding of $Y \hookrightarrow \P^{N-1}$ by $L^2$-orthonormal sections $(s_1,\ldots,s_N)$ of $H^0(Y,-kK_Y)$ with respect to $h_Y$. We may let $k=1$ for simplicity, and further assume that $s_i$'s are eigensections with weights $\lambda_i$ with respect to the infinitesimal $V_Y$-action, where $V_Y$ denotes the real part of $W_Y$ (see also \cite[Section 3.2]{CSW18}). We know that the $JV_Y$-action on $H^0(Y,-K_Y)$ has the following expression (\cf \cite{Kob95}):
\begin{equation} \label{iac}
R(s):=\ddt \exp(tJV_Y) \cdot s|_{t=0}=\sqrt{-1}\r_Y s-\nabla_{JV_Y} s, \quad s \in H^0(Y,-K_Y)
\end{equation}
on $Y_{\rm reg}$. Since $\r_Y$ and $R(s)$ are continuous (here we used the fact that the $JV_Y$-action on holomorphic sections is compatible with the projective embedding of $Y$), we can extend the derivative in the vertical direction $\sqrt{-1}\r_Y s$ as well as horizontal direction $-\nabla_{JV_Y}s$ as a continuous section of $-K_Y$ over $Y$. This together with the continuity of $h_Y$ shows that the equality
\[
\sqrt{-1} \lambda_i |s_i|_{h_Y}^2=\sqrt{-1} \r_Y |s_i|_{h_Y}^2-(\nabla_{JV_Y} s_i, s_i)_{h_Y}
\]
holds on $Y$ (where we note that the real part of $(\nabla_{JV_Y} s_i, s_i)_{h_Y}$ actually vanishes since the function $|s_i| _{h_Y}^2$ is invariant under the $JV_Y$-action). Now we repeat the argument in Section \ref{pre} to find a point $x \in Y$ such that:
\begin{itemize}
\item $s_N(x) \neq 0$ for the eigensection $s_N$ corresponding to the maximum weight $\lambda_N=\lambda_{\rm max}$.
\item $s_i(x)=0$ whenever $\lambda_i<\lambda_N$.
\item $W_Y|_x=0$ where we regard $W_Y$ as a holomorphic vector field on $\P^{N-1}$.
\end{itemize}
Moreover, since $x$ is a fixed point of $JV_Y$-action, the derivative in the horizontal direction vanishes at $x$ (in other words, $JV_Y$ acts on the fiber of $x$ with weight $\sqrt{-1} \r_Y(x)$). So we have
\begin{equation} \label{vsp}
(\nabla_{JV_Y}s_N)(x)=0.
\end{equation}
We remark that if $x$ is a regular point of $Y$, we get \eqref{vsp} immediately from the third property $W_Y|_x=0$. In general case, it seems to be difficult to prove the continuity of $\nabla_{JV_Y}s_N$ and \eqref{vsp} without the formula \eqref{iac} since the connection $\nabla$ is intrinsic, defined only on $Y_{\rm reg}$. Anyway, we have
\[
\lambda_{\rm max}=\r_Y(x) \leq \max_Y \r_Y,
\]
which is enough to prove our statement.

By $S_{\omega_t} \geq \inf_X S_{\omega_0} \cdot e^{-t}$ and $C^\infty$-Cheeger-Gromov convergence of the K\"ahler-Ricci flow away from $Y_{\rm sing}$, we have $S_{\omega_Y} \geq 0$ on $Y_{\rm reg}$. So we have
\begin{eqnarray*}
\r_Y &\leq& S_{\omega_Y}+|\bp \r_Y|_{\omega_Y}^2+\r_Y \\
&=& \Delta_{\omega_Y} \r_Y+|\bp \r_Y|_{\omega_Y}^2+\r_Y+n\\
&=& n-\frac{1}{V} H(\cX_b)
\end{eqnarray*}
on $Y_{\rm reg}$. Since $\r_Y$ is continuous, the above inequality actually holds on $Y$. Let $R(X)$ be the greatest lower bound of the Ricci curvature on $X$. From \cite{Sze11} we know that for any $r \in (0, R(X))$ the twisted Mabuchi functional $M+(1-r)(I-J)$ is coercive. By taking the non-Archimedean limit we have
\[
\DF(\cX_a)+(1-r)(I^\NA-J^\NA)(\cX_a) \geq 0.
\]
Thus
\begin{eqnarray*}
\DF(\cX_a) &\geq& (1-r)\bigg[\int_Y \r_Y \omega_Y^n-V \max_Y \r_Y \bigg]\\
&\geq& (1-r)(\DF(\cX_a)-H(\cX_a)+H(\cX_a)-nV)\\
&=& (1-r)(\DF(\cX_a)-nV),
\end{eqnarray*}
and hence
\[
\DF(\cX_a) \geq -\frac{1-r}{r}nV.
\]
By letting $r \nearrow R(X)$, we finish the proof.
\end{proof}
\newpage

\end{document}